   \theoremstyle{plain}
   \newtheorem{thm}{Theorem}
   \newtheorem{lem}[thm]{Lemma}
   \theoremstyle{definition}
   \newtheorem{defn}[thm]{Definition}
   \theoremstyle{remark}
   \newtheorem{remark}[thm]{Remark}
\author{V. Manuilov}
\date{}
\address{Moscow State University,
Leninskie Gory, Moscow, 
119991, Russia}
\email{manuilov@mech.math.msu.su}
\thanks{The author acknowledges partial support by the RFBR grant No. 14-01-00007.}
\title{A finitedimensional version of Fredholm representations}
\begin{document}
\maketitle

\begin{abstract}
We consider pairs of maps from a discrete group $\Gamma$ to the unitary group. The deficiencies of these maps from being homomorphisms may be great, but if they are close to each other then we call such pairs {\it balanced}. We show that balanced pairs determine elements in the $K^0$ group of the classifying space of the group. We also show that a Fredholm representation of $\Gamma$ determines balanced pairs. 

\end{abstract}

\section{Introduction}

It is well known that various generalizations of unitary group representations, e.g. almost representations, Fredholm representations, representations into $U(p,q)$, quasi-representations etc. can be viewed as representatives of the $K$-homology of the group $C^*$-algebra. It is interesting to know, how far we can generalize the notion of a representation (or a pair of representations) keeping the property to determine a class in $K$-homology. It was shown recently
in \cite{Man-K} that K-theory elements can be represented not necessarily by pairs of projections,
but by pairs satisfying weaker properties. We follow this to find a generalization for
pairs of matrix-valued functions on a group called balanced pairs.

Let $\Gamma$ be a finitely generated group, and let $M_n$ denote the $C^*$-algebra of operators on the $n$-dimensional Hilbert space.
Given a map $\pi:\Gamma\to M_n$, and $g,h\in\Gamma$, denote by $M_\pi(g,h)=\pi(gh)-\pi(g)\pi(h)$ the defect, i.e. the deviation of $\pi$ from multiplicativity.  
 
\begin{defn}
Given a finite set $F\in\Gamma$ and $\varepsilon>0$, a pair $(\pi^+,\pi^-)$ of maps $\Gamma\to M_n$ satisfying $\pi^\pm(g^{-1})=\pi^\pm(g)^*$ for any $g\in F$, is {\it $(F,\varepsilon)$-admissible} if 
\begin{equation}\label{condition1}
\|M_{\pi^\pm}(g,h)(\pi^+(\gamma)-\pi^-(\gamma)\|<\varepsilon
\end{equation}
for any $g,h,\gamma\in F$.
A pair $(\pi^+,\pi^-)$ satisfying $\pi^\pm(g^{-1})=\pi^\pm(g)^*$ for any $g\in F$, is {\it $(F,\varepsilon)$-balanced} if   
\begin{equation}\label{condition2}
\|M_{\pi^+}(g,h)-M_{\pi^-}(g,h)\|<\varepsilon
\end{equation}
for any $g,h\in F$, and
\begin{equation}\label{condition2'}
\|\pi^+(k)M_{\pi^+}(g,h)-\pi^-(k)M_{\pi^-}(g,h)\|<\varepsilon
\end{equation}
for any $g,h,k\in F$.

\end{defn}

A family of pairs of maps $(\pi^\pm_n)_{n\in\mathbb N}:\Gamma\to M_{k_n}$ is {\it asymptotically admissible} (resp., {\it asymptotically balanced}) if, for any finite $F\subset\Gamma$, the pair $(\pi^+_n,\pi^-_n)$ is $(F,\varepsilon_n)$-admissible (resp., balanced) with $\varepsilon_n\to 0$ as $n\to\infty$. We also use these terms for families of maps with continuous parameter $t\in[0,\infty)$.

A similar definition appeared in \cite{MY IJM}, but there we required that the range of the maps $\pi^\pm$ lies in the unitary group of $M_n$. Here we don't assume $\pi^\pm(g)$ to be invertible.

We shall show that asymptotically admissible (resp., balanced) pairs can be viewed as finitedimensional versions of Fredholm representations.

\section{Making Fredholm representations finitedimensional}

The definition of Fredholm representations is due to A.S.Mishchenko \cite{Mish}. A Fredholm representation is a triple $(\pi^+,F,\pi^-)$ of two representations, $\pi^+,\pi^-$, of $\Gamma$, on a Hilbert space $H$, and of a Fredholm `intertwining' operator $F$ on $H$, such that $\pi^+(g)F-F\pi^-(g)$ is compact for any $g\in\Gamma$. Well known simplifications allow to drop out either one of the two representations, or $F$. For example, we may change $F$ by either an isometry or a coisometry, and then change $\pi^-$ by $F\pi^- F^*$ or $\pi^+$ by $F^*\pi^+ F$, which gives us a triple of the form $(\pi^+,\operatorname{Id},\pi^-)$. So, from now on, let us consider pairs $(\pi^+,\pi^-)$ with $\pi^+(g)-\pi^-(g)$ compact for any $g\in\Gamma$ as Fredholm representations.

Let $L_n\in\mathbb B(H)$ be an increasing sequence of subspaces, such that $\dim L_n=n$ and $\cup_{n\in\mathbb N}L_n$ is dense in $H$, and let $P_n$ denote the projection onto $L_n$. For any operator $A$, set $A_n=P_nA|_{L_n}$. Similarly, we write $\pi_n$ for the map given by $g\mapsto(\pi(g))_n$.

\begin{thm}
For any Fredholm representation $(\pi^+,\pi^-)$, the sequence $(\pi^+_n,\pi^-_n)$ is asymptotically admissible and asymptotically balanced.

\end{thm}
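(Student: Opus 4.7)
The plan rests on two observations. First, since each $\pi^\pm$ is a genuine representation, $\pi^\pm(gh)=\pi^\pm(g)\pi^\pm(h)$, and inserting $I=P_n+(1-P_n)$ into the product yields the identity
\[ M_{\pi^\pm_n}(g,h) = P_n\pi^\pm(g)(1-P_n)\pi^\pm(h)P_n, \]
so every compressed defect carries a central factor $1-P_n$. Second, because $\cup_n L_n$ is dense in $H$, $1-P_n\to 0$ strongly (and the same for its adjoint), so $\|(1-P_n)K\|\to 0$ and $\|K(1-P_n)\|\to 0$ for every compact $K$. The Fredholm hypothesis supplies the compact operators: $\pi^+(x)-\pi^-(x)$ is compact for every $x\in\Gamma$. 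The whole proof is then pure bookkeeping: manipulate each quantity into a form in which $1-P_n$ sits adjacent to a compact difference $\pi^+(x)-\pi^-(x)$.

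For the admissibility inequality \eqref{condition1}, I substitute the identity for $M_{\pi^\pm_n}(g,h)$ together with $\pi^+_n(\gamma)-\pi^-_n(\gamma)=P_n[\pi^+(\gamma)-\pi^-(\gamma)]P_n$, and then split the middle $P_n$ of their product as $I-(1-P_n)$; each of the two resulting summands contains a factor of the shape $(1-P_n)K$ with $K$ a compact operator built from $\pi^\pm(h)$ and $\pi^+(\gamma)-\pi^-(\gamma)$, so both summands shrink in norm. For \eqref{condition2}, I telescope
\[ M_{\pi^+_n}(g,h)-M_{\pi^-_n}(g,h) = P_n[\pi^+(g)-\pi^-(g)](1-P_n)\pi^+(h)P_n + P_n\pi^-(g)(1-P_n)[\pi^+(h)-\pi^-(h)]P_n, \]
and each summand displays a compact difference adjacent to $1-P_n$, hence has norm $\to 0$.

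Condition \eqref{condition2'} is the only one that is not immediate. Using $P_n\pi^\pm(k)P_n\pi^\pm(g)=P_n\pi^\pm(kg)-P_n\pi^\pm(k)(1-P_n)\pi^\pm(g)$, I rewrite
\[ \pi^\pm_n(k)\,M_{\pi^\pm_n}(g,h) = M_{\pi^\pm_n}(kg,h)\;-\;P_n\pi^\pm(k)(1-P_n)\pi^\pm(g)(1-P_n)\pi^\pm(h)P_n. \]
The $\pm$-difference of the first term is controlled by the estimate for \eqref{condition2} already proved, applied now to the pair $(kg,h)$. The subtle point, and the only place requiring real care, is that the second term has norm which need \emph{not} tend to zero for either sign individually; one must therefore form the $\pm$-difference before estimating. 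A three-step telescoping that successively replaces $\pi^-(k),\pi^-(g),\pi^-(h)$ by $\pi^+(k),\pi^+(g),\pi^+(h)$ produces three summands, each of which contains a compact difference $\pi^+(x)-\pi^-(x)$ that is either immediately preceded or immediately followed by a factor $1-P_n$, so each shrinks in norm by the principle of paragraph one. This completes the plan; the only non-routine step is recognising that in the analysis of \eqref{condition2'} one cannot bound sign by sign but must collect everything into the compact difference first.
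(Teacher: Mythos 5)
Your proof is correct, and it rests on the same two pillars as the paper's argument: the identity $M_{\pi^\pm_n}(g,h)=P_n\pi^\pm(g)(1-P_n)\pi^\pm(h)P_n$ and the fact that $\|(1-P_n)K\|\to0$ and $\|K(1-P_n)\|\to 0$ for compact $K$. The bookkeeping, however, is genuinely different. For (\ref{condition1}) and (\ref{condition2}) the paper first cuts the compact difference down to a finite corner, replacing $\pi^+_n(\gamma)-\pi^-_n(\gamma)$ by $P_k(\cdot)P_k$ up to $\varepsilon$, and then for (\ref{condition2}) writes everything as a $3{\times}3$ block matrix with respect to $P_kH\oplus(P_n-P_k)H\oplus(1-P_n)H$ and inspects which entries survive; your telescoping of the $\pm$-difference achieves the same end more directly and with less notation (the paper's version yields an explicit constant, $8\varepsilon$, which you do not need for the asymptotic statement). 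For (\ref{condition2'}) the paper says only ``combine (\ref{condition1}) and (\ref{condition2})''; the intended decomposition is $[\pi^+_n(k)-\pi^-_n(k)]M_{\pi^+_n}(g,h)+\pi^-_n(k)[M_{\pi^+_n}(g,h)-M_{\pi^-_n}(g,h)]$, where the first summand is controlled by taking adjoints so that (\ref{condition1}) applies with $F$ replaced by $F\cup F^{-1}$. Your route is different: you reduce $\pi^\pm_n(k)M_{\pi^\pm_n}(g,h)$ to $M_{\pi^\pm_n}(kg,h)$ minus a doubly truncated term and telescope the latter; this works (at the harmless cost of enlarging $F$ to contain the products $kg$), and your remark that the doubly truncated term is not small sign-by-sign and must be differenced before being estimated is exactly the right point of care. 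Both arguments are sound; yours is more self-contained and uniform across the three conditions, while the paper's is shorter for (\ref{condition2'}) at the price of leaving the decomposition implicit.
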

\begin{proof}
One obviously has $\pi_n^\pm(g^{-1})=\pi_n^\pm(g)^*$ for any $g\in\Gamma$. Let us check (\ref{condition1}).
Since $\pi^+(\gamma)-\pi^-(\gamma)$ is compact for any $\gamma\in \Gamma$, so for any $\varepsilon>0$ and for any finite $F\subset\Gamma$, there is $K$ such that for any $n>k>K$ one has 
$$
\|\pi^+_n(\gamma)-\pi^-_n(\gamma)-P_k(\pi^+_n(\gamma)-\pi^-_n(\gamma))P_k\|<\varepsilon
$$ 
for any $\gamma\in F$. Fix $k>K$, then it suffices to show that for sufficiently large $n$, $\|P_k(\pi^\pm_n(gh)-\pi^\pm_n(g)\pi^\pm_n(h))\|$ and $\|(\pi^\pm_n(gh)-\pi^\pm_n(g)\pi^\pm_n(h))P_k\|$ can be made smaller than $\varepsilon$.
As the two terms are similar, let us estimate the first one.

We have 
$$
\|P_k(\pi^\pm_n(gh)-\pi^\pm_n(g)\pi^\pm_n(h))\|=\|P_kP_n\pi^\pm(g)(1-P_n)\pi^\pm(h)P_n\|
$$
$$
=\|P_k\pi^\pm(g)(1-P_n)\pi^\pm(h)P_n\|\leq\|P_k\pi^\pm(g)(1-P_n)\|.
$$
Then for a finite number of elements $g\in F\subset \Gamma$ and for the fixed $k$ it is always possible to find $N$ such that for any $n>N$ one has $\|P_k\pi^\pm(g)(1-P_n)\|<\varepsilon$.

Now let us check (\ref{condition2}). We use the notation from the previous paragraph, namely the projections $P_k$ and $P_n$. Decompose the Hilbert space $H$ as $H=P_kH\oplus(P_n-P_k)H\oplus(1-P_n)H$, and write operators on $H$ as $3{\times}3$ matrices with respect to this decomposition. 

Let $\pi^\pm(g)=A^\pm=(a_{ij}^\pm)_{i,j=1}^3$, $\pi^\pm(h)=B^\pm=(b_{ij}^\pm)_{i,j=1}^3$, $\pi^\pm(gh)=C^\pm=(c_{ij}^\pm)_{i,j=1}^3$. We know that 
\begin{equation}\label{exact1}
A^\pm B^\pm=C^\pm, 
\end{equation}
and that $\|x_{ij}^+-x_{ij}^-\|<\varepsilon$, where $x=a,b,c$, for all $i,j=1,2,3$ except the case $i=j=1$. 
Using (\ref{exact1}), we obtain that
$$
M_{\pi^\pm_n}(g,h)=
\left(\begin{matrix}c^\pm_{11}&c^\pm_{12}\\c^\pm_{21}&c^\pm_{22}\end{matrix}\right)-
\left(\begin{matrix}a^\pm_{11}&a^\pm_{12}\\a^\pm_{21}&a^\pm_{22}\end{matrix}\right)
\left(\begin{matrix}b^\pm_{11}&b^\pm_{12}\\b^\pm_{21}&b^\pm_{22}\end{matrix}\right)
=
\left(\begin{matrix}a^\pm_{13}b^\pm_{31}&a^\pm_{13}b^\pm_{32}\\
a^\pm_{23}b^\pm_{21}&a^\pm_{23}b^\pm_{32}\end{matrix}\right),
$$
hence $\|M_{\pi^+_n}(g,h)-M_{\pi^-_n}(g,h)\|\leq 
4\max_{(i,j),(k,l)\neq(1,1)}\|a^+_{ij}b^+_{kl}-a^-_{ij}b^-_{kl}\|<8\varepsilon$.

Finally, to obtain (\ref{condition2'}), we have to combine (\ref{condition1}) and (\ref{condition2}).

\end{proof}

One can replace the discrete parameter by a continuous one. This follows from the following Lemma.

\begin{lem}
For $t\in[n,n+1]$ set $\pi^\pm_t(g)=t\pi^\pm_n(g)+(1-t)\pi^\pm_{n+1}(g)$. Then the family of pairs $(\pi^+_t,\pi^-_t)$ is asymptotically admissible and asymptotically balanced.

\end{lem}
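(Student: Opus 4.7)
The plan is to exploit the convex-combination structure of $\pi^\pm_t$ and reduce to Theorem~2.2 via a simple algebraic identity for the defect. Writing $s = t - n \in [0,1]$ and interpreting both $\pi^\pm_n$ and $\pi^\pm_{n+1}$ as operators on $L_{n+1}$ (with $\pi^\pm_n(g) = P_n\pi^\pm(g)P_n$ extended by zero on $L_{n+1} \ominus L_n$), a direct expansion of $\pi^\pm_t(g)\pi^\pm_t(h)$ together with the elementary identities $\alpha = \alpha^2 + \alpha\beta$, $\beta = \beta^2 + \alpha\beta$ (for $\alpha + \beta = 1$) yields
\begin{equation*}
M_{\pi^\pm_t}(g,h) = (1-s)M_{\pi^\pm_n}(g,h) + s M_{\pi^\pm_{n+1}}(g,h) + s(1-s)\,\Delta^\pm(g)\Delta^\pm(h),
\end{equation*}
where $\Delta^\pm(g) := \pi^\pm_n(g) - \pi^\pm_{n+1}(g)$. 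Everything then reduces to controlling the quadratic cross term.

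For (\ref{condition2}), the first two summands of this identity are asymptotically small by Theorem~2.2 applied at $n$ and $n+1$. The cross term in the $+/-$ difference is $s(1-s)\bigl(\Delta^+(g)\Delta^+(h) - \Delta^-(g)\Delta^-(h)\bigr)$, which I split as $\Delta^+(g)\bigl[\Delta^+(h) - \Delta^-(h)\bigr] + \bigl[\Delta^+(g) - \Delta^-(g)\bigr]\Delta^-(h)$. Since $\|\Delta^\pm\|$ stays uniformly bounded, it suffices to show that $\|\Delta^+(g) - \Delta^-(g)\| \to 0$. Setting $K = \pi^+(g) - \pi^-(g)$, which is compact, and $Q = P_{n+1} - P_n$, a direct computation using $P_{n+1} = P_n + Q$ gives $\Delta^+(g) - \Delta^-(g) = -P_n K Q - Q K P_n - Q K Q$; since $Q$ is a rank-one projection onto a unit vector that converges weakly to $0$, compactness of $K$ forces $\|KQ\|,\|K^*Q\| \to 0$.

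For (\ref{condition1}), I would mimic the Theorem~2.2 strategy: approximate $\pi^+_t(\gamma) - \pi^-_t(\gamma)$ by $P_k(\pi^+_t(\gamma) - \pi^-_t(\gamma))P_k$ for a fixed large $k$, and then bound $\|M_{\pi^\pm_t}(g,h)P_k\|$ and $\|P_k M_{\pi^\pm_t}(g,h)\|$ for $n \gg k$. The first two summands of the defect identity are handled verbatim as in Theorem~2.2; for the cross term, the relations $QP_k = 0$ and $P_n P_k = P_k$ (valid once $n \geq k$) collapse $\Delta^\pm(h)P_k$ to $-Q\pi^\pm(h)P_k$, whose norm equals $\|P_k\pi^\pm(h^{-1})Q\|$ and tends to $0$ because $P_k$ is of finite rank and the unit vector in the range of $Q$ converges weakly to $0$. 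Condition (\ref{condition2'}) is then obtained by combining (\ref{condition1}) and (\ref{condition2}) exactly as in the closing line of the proof of Theorem~2.2.

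The main obstacle is the genuinely quadratic nature of $M$, which produces the cross term $s(1-s)\Delta^\pm(g)\Delta^\pm(h)$ whose individual factors are \emph{not} small in norm; the whole argument hinges on the fact that it is their $+/-$ difference that is small, controlled via compactness of $\pi^+(g) - \pi^-(g)$ and the weak null convergence of the unit vectors in $L_{n+1}\ominus L_n$.
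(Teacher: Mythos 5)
Your proof is correct and is precisely the ``direct calculation similar to that above'' that the paper leaves to the reader: the defect identity $M_{\pi^\pm_t}(g,h)=(1-s)M_{\pi^\pm_n}(g,h)+sM_{\pi^\pm_{n+1}}(g,h)+s(1-s)\Delta^\pm(g)\Delta^\pm(h)$ checks out, and the cross term --- the only genuinely new ingredient beyond the theorem for the discrete sequence --- is correctly disposed of using compactness of $\pi^+(g)-\pi^-(g)$ together with the weak nullity of the rank-one projections $Q=P_{n+1}-P_n$ (and the collapse $\Delta^\pm(h)P_k=-Q\pi^\pm(h)P_k$ for the admissibility condition). You have also, rightly, read the lemma's formula as the convex combination with weights $n+1-t$ and $t-n$, which is clearly what is intended.
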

\begin{proof}
Direct calculation similar to that above. 

\end{proof}

Let $X$ be a compact Hausdorff space, $\pi_1(X)=\Gamma$, $\{U_i\}_{i\in I}$ a finite covering, and let $\varphi_i$, $i\in I$, be continuous functions on $X$ such that $0\leq \varphi_i(x)\leq 1$, $i\in I$, $x\in X$, $\operatorname{supp}\varphi_i\subset U_i$ and $\sum_{i\in I}\varphi^2_i(x)=1$ for any $x\in X$. Let $\gamma=\{\gamma_{ij}\}_{i,j\in I}$ be a $\Gamma$-valued cocycle, i.e. $\gamma_{ji}=\gamma_{ij}^{-1}$ for any $i,j\in I$, and $\gamma_{ij}\in\Gamma$ and $\gamma_{ij}\gamma_{jk}=\gamma_{ik}$ whenever $U_i\cap U_j\cap U_k$ is not empty.

Then 
\begin{equation}\label{proj}
p(x)=(p_{ij}(x))_{i,j\in I},\mbox{\ where\ } p_{ij}(x)=\varphi_i(x)\varphi_j(x)\gamma_{ij}, 
\end{equation}
is known to be idempotent for each $x\in X$.

For a map $\pi:\Gamma\to M_n$, put 
\begin{equation}\label{A}
A_\pi(x)=\pi(p(x))=(\varphi_i(x)\varphi_j(x)\pi(\gamma_{ij}))_{i,j\in I}. 
\end{equation}
When $\pi$ is a (unitary) group representation then $A_\pi$ is a (selfadjoint) projection. 

For shortness' sake set $A_{\pi^+}=a$, $A_{\pi^-}=b$. Let $\delta=|I|\cdot\varepsilon$.

If $(\pi^+,\pi^-)$ is $(F,\varepsilon)$-admissible then the pair $(a,b)$ satisfies the following conditions:
\begin{equation}\label{selfadjoint}
a^*=a;\quad b^*=b; 
\end{equation}
\begin{equation}\label{K1}
\|(a^2-a)(a-b)\|<\delta; \quad \|(b^2-b)(a-b)\|<\delta.
\end{equation}

If $(\pi^+,\pi^-)$ is $(F,\varepsilon)$-balanced then the pair $(a,b)$ satisfies (\ref{selfadjoint}) and
\begin{equation}\label{K2}
\|f(a)-f(b)\|<\delta
\end{equation}
for $f(t)=t(1-t)$ and $f(t)=t^2(1-t)$.

\section{Relation to $K$-theory}

Consider the following two sets of relations on selfadjoints $a$ and $b$:

\begin{equation}\label{K-1}
0\leq a,b\leq 1;\quad (a^2-a)(a-b)=(b^2-b)(a-b)=0;
\end{equation}
and 
\begin{equation}\label{K-2}
p(a)=p(b) \mbox{\ \ for\ } p(t)=t(1-t) \mbox{\ and\ } p(t)=t^2(1-t).
\end{equation}

In \cite{Man-K} it was shown that the $K_0$ group of a $C^*$-algebra $A$ is the set of homotopy classes of selfadjoint pairs $(a,b)$ of matrices over $A$ satisfying either (\ref{K-1}) or (\ref{K-2}). 

As we may replace exact projections by almost projections (i.e. selfadjoints $A$ with $\|A^2-A\|<\frac{1}{4}$), so the relations (\ref{K-1}) and (\ref{K-2}) can be replced by their approximate versions: (\ref{K1}) plus $0\leq a,b\leq 1$, and (\ref{K2}), respectively, for sufficiently small $\delta$. It was shown in \cite{Man-K} in particular, that the element of the $K_0$ group corresponding to a pair $(a,b)$ satisfying (\ref{K-1}) is given by the formal difference $[P]-[Q]$, where 
$$
P=P(a,b)=\left(\begin{matrix}1-b&g(a)\\g(a)&a\end{matrix}\right)
$$ 
is a projection ($g(t)=\sqrt{t-t^2}$) and $Q=\left(\begin{matrix}1&0\\0&0\end{matrix}\right)$. If $a$ and $b$ satisfy (\ref{K1}) and $0\leq a,b\leq 1$ then $P$ is only an almost projection, but $[P]-[Q]$ still determines an element in $K_0$.

Note that if $a$, $b$ are genuine projections then $P=P(a,b)=\left(\begin{matrix}1-b&0\\0&a\end{matrix}\right)$, hence $[P]-[Q]=[1-b]+[a]-[1]=[a]-[b]$.

No explicit formula was given in \cite{Man-K} for pairs satisfying (\ref{K-2}). Since $A_{\pi^\pm}$ do not need to satisfy $0\leq A_{\pi^\pm}\leq 1$, there are two ways to proceed. We may apply the cutting function $h$, $h(t)=\left\lbrace\begin{array}{rl}0,&t<0;\\t,&0\leq t\leq 1;\\1,&t>1\end{array}\right.$ to make $h(A_{\pi^\pm})$ satisfy it. This approach was used in \cite{MY IJM}, but it does not give explicit formulas. In this paper we present an explicit formula for an almost projection $P$ when $a,b$ satisfy the relations (\ref{K2}).

Note that $P$ is unitarily equivalent to 
$$
P'=P'(a,b)=\left(\begin{matrix}
1+(1-a)^{1/2}(a-b)(1-a)^{1/2}&(1-a)^{1/2}(b-a)a^{1/2}\\
a^{1/2}(b-a)(1-a)^{1/2}&a^{1/2}(a-b)a^{1/2}
\end{matrix}\right)
$$ 
via the unitary $U=\left(\begin{matrix}(1-a)^{1/2}&-a^{1/2}\\a^{1/2}&(1-a)^{1/2}
\end{matrix}\right)$, $P'=U^*PU$.    

Set 
$$
P''=P''(a,b)=\left(\begin{matrix}
1+(1-a)(a-b)(1-a)&(1-a)(b-a)a\\
a(b-a)(1-a)&a(a-b)a
\end{matrix}\right).
$$ 

\begin{lem}\label{L3}
Let $a$, $b$ be selfadjoints satisfying $0\leq a,b\leq 1$ and $p(a)=p(b)$ for $p(t)=t(1-t)$ and $p(t)=t^2(1-t)$. Then $P'(a,b)=P''(a,b)$.

\end{lem}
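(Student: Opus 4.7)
The plan is to set $c=a-b$ and reduce $P'(a,b)=P''(a,b)$ entrywise to two functional-calculus relations, $(1-a)^{1/2}c=(1-a)c$ and $a^{1/2}c=ac$ (together with their $*$-adjoints), both of which will follow from the single annihilation identity $a(1-a)c=0$. First I would verify this annihilation by the direct computation
\[
c\cdot a(1-a) = (a-b)\,a(1-a) = a^{2}(1-a) - b\cdot a(1-a) = b^{2}(1-b) - b\cdot b(1-b) = 0,
\]
using $a^{2}(1-a)=b^{2}(1-b)$ once and $a(1-a)=b(1-b)$ twice; taking adjoints gives $a(1-a)c=0$ as well.

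To pass from $a(1-a)c=0$ to $(1-a)^{1/2}c=(1-a)c$ in a general $C^{*}$-setting, I would set $y=(1-a)^{1/2}-(1-a)=(1-a)^{1/2}\bigl[1-(1-a)^{1/2}\bigr]$. The scalar identity $\bigl(1-(1-t)^{1/2}\bigr)\bigl(1+(1-t)^{1/2}\bigr)=t$, together with the invertibility of $1+(1-a)^{1/2}$ (whose spectrum lies in $[1,2]$), gives
\[
y^{2} = (1-a)\bigl[1-(1-a)^{1/2}\bigr]^{2} = a(1-a)\cdot a\bigl[1+(1-a)^{1/2}\bigr]^{-2} = g(a)\cdot a(1-a),
\]
with $g(a)=a\bigl[1+(1-a)^{1/2}\bigr]^{-2}$ a bounded continuous function of $a$. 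Hence $y^{2}c=g(a)\cdot a(1-a)c=0$, and the $C^{*}$-identity $(yc)^{*}(yc)=c\,y^{2}c=0$ forces $yc=0$, which is the desired equation. The symmetric argument with $a$ and $1-a$ exchanged yields $a^{1/2}c=ac$, and taking adjoints gives $c(1-a)^{1/2}=c(1-a)$ and $ca^{1/2}=ca$.

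With these four identities, matching entries is a short rewrite: for example,
\[
(1-a)^{1/2}c(1-a)^{1/2} = (1-a)^{1/2}\bigl[c(1-a)\bigr] = \bigl[(1-a)^{1/2}c\bigr](1-a) = (1-a)c(1-a)
\]
handles $P'_{11}$ versus $P''_{11}$, and the off-diagonal and $(2,2)$ entries follow by the same substitutions applied to $a^{1/2}$ and/or $(1-a)^{1/2}$. I expect the main obstacle to be the middle paragraph: one has to notice that the two hypotheses conspire to give $a(1-a)c=0$, and then lift this kernel-style relation to an honest norm identity in a $C^{*}$-algebra (rather than in a von Neumann algebra, where the spectral projection of $a$ onto $\{0,1\}$ would finish things immediately), which needs the factorisation $y^{2}=g(a)\,a(1-a)$ followed by the $C^{*}$-identity.
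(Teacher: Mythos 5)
Your proof is correct, but it takes a genuinely different route from the paper's. The paper also begins, as you do, by deducing $(a^2-a)(a-b)=(b^2-b)(a-b)=0$ from the two relations $p(a)=p(b)$; but it then finishes by citing the structure theorem from \cite{Man-K} for the universal $C^*$-algebra $D$ generated by two positive contractions subject to these relations, realized concretely inside $C([-1,1];M_2)$. In that model $a(t)=b(t)$ for $t\le 0$ and $a(t)$ is a projection for $t\ge 0$, so $a^{1/2}=a$ and $(1-a)^{1/2}=1-a$ wherever $a-b\ne 0$, and $P'=P''$ is read off pointwise; the identity then descends to every quotient of $D$, i.e.\ to every pair satisfying the relations. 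You instead stay entirely inside functional calculus: from the single consequence $a(1-a)(a-b)=0$ you extract $(1-a)^{1/2}(a-b)=(1-a)(a-b)$ and $a^{1/2}(a-b)=a(a-b)$ via the factorization $y^2=g(a)\,a(1-a)$ with $g$ continuous (using invertibility of $1+(1-a)^{1/2}$) and the $C^*$-identity $\|yc\|^2=\|c\,y^2c\|$, and then match the four entries by substitution. Your argument is self-contained and does not rely on the faithfulness of the explicit representation of $D$, and it makes visible that only the relation involving $a$ (not $b$) is needed, since only square roots of $a$ and $1-a$ appear in $P'$; the paper's argument is shorter given the cited result and places the lemma in the conceptual frame of the universal algebra. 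Both proofs in fact establish the identity under the weaker hypothesis $(a^2-a)(a-b)=0$ together with $0\le a\le 1$.
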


\begin{proof}

Suppose that $a-a^2=b-b^2$ and $a^2-a^3=b^2-b^3$. Then
$$
(a^2-a)(a-b)=a^3-a^2-(a^2-a)b=a^3-a^2-(b^2-b)b=0.
$$
Similarly, $(b^2-b)(a-b)=0$.

There is (see \cite{Man-K}) a universal $C^*$-algebra $D$ generated by two selfadjoint positive contractions $a$, $b$ subject to the relations $(a^2-a)(a-b)=(b^2-b)(a-b)=0$. It was shown in \cite{Man-K} that $D\subset C([-1,1];M_2)$ is a subalgebra of matrix-valued functions 
$$
f=\left(\begin{matrix}f_{11}&f_{12}\\f_{21}&f_{22}\end{matrix}\right)\in C([-1,1];M_2)
$$ 
such that
$$
f_{11}(-1)=0;\quad f_{12}(t)=f_{21}(t)=f_{22}(t)=0 \ \mbox{for\ }t\in[-1,0];\quad f_{12}(1)=f_{21}(1)=0.
$$   
The generators $a,b\in D$ are given by the formulas
$$
a(t)=\left\lbrace
\begin{array}{cl}\left(\begin{matrix}\cos^2\frac{\pi}{2}t&0\\0&0\end{matrix}\right)&\mbox{for\ }t\in[-1,0];\\
\left(\begin{matrix}1&0\\0&0\end{matrix}\right)&\mbox{for\ }t\in[0,1],
\end{array}\right.
$$
$$
b(t)=\left\lbrace
\begin{array}{cl}\left(\begin{matrix}\cos^2\frac{\pi}{2}t&0\\0&0\end{matrix}\right)&\mbox{for\ }t\in[-1,0];\\
\left(\begin{matrix}\cos^2\frac{\pi}{2}t&\cos\frac{\pi}{2}t\sin\frac{\pi}{2}t\\
\cos\frac{\pi}{2}t\sin\frac{\pi}{2}t&\sin^2\frac{\pi}{2}t\end{matrix}\right)&\mbox{for\ }t\in[0,1].
\end{array}\right.
$$

Notice that $a(t)-b(t)=0$ for $t\leq 0$, and that $a(t)^{1/2}=a(t)$, $(1-a(t))^{1/2}=1-a(t)$ for $t\geq 0$, therefore, $P'$ equals $P''$.

\end{proof}

\begin{lem}\label{L2}
Let $a$, $b$ be two selfadjoints satisfying $p(a)=p(b)$, where $p(t)$ is either $t(1-t)$ or $t^2(1-t)$ (but not necessarily $0\leq a,b\leq 1$). Then $P''$ is a projection.

\end{lem}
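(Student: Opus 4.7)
The plan is to verify separately that $P''$ is selfadjoint and that $(P'')^2 = P''$. Selfadjointness will be immediate: the two diagonal entries are manifestly selfadjoint, and the two off-diagonal entries $(1-a)(b-a)a$ and $a(b-a)(1-a)$ are adjoints of each other.

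For idempotency, the first step is to distill from the two hypotheses $p(a)=p(b)$ and $q(a)=q(b)$ (with $p(t)=t(1-t)$, $q(t)=t^2(1-t)$) the single algebraic consequence $(a^2-a)(a-b)=0$. This is exactly the calculation already used at the start of the proof of Lemma~\ref{L3}: $(a^2-a)(a-b) = (a^3-a^2)-(b^3-b^2) = (a^3-b^3)-(a^2-b^2) = 0$, where the first equality uses $(a^2-a)b=(b^2-b)b$ and the last uses $a^3-b^3 = a^2-b^2$. Writing $u=1-a$ and $c=a-b$, this reads $uac=0$, i.e.\ $ac=a^2c$; taking the adjoint gives $ca=ca^2$. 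An immediate consequence is $a^n c=ac$ and $ca^n=ca$ for all $n\geq 1$. Separately, expanding $b-b^2=a-a^2$ with $b=a-c$ yields $c^2=ac+ca-c$, which I will use whenever $c^2$ appears.

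The clean way to organise the matrix algebra is to observe that
$$
P''=P_0+vcv^*,\qquad P_0=\left(\begin{matrix}1&0\\0&0\end{matrix}\right),\qquad v=\left(\begin{matrix}1-a\\-a\end{matrix}\right),
$$
so that $v^*v=u^2+a^2=1-2a+2a^2$. The relation $ca^2=ca$ gives the pleasant simplification $c\,(v^*v)=c-2ca+2ca^2=c$, and hence $(vcv^*)^2=vc(v^*v)cv^*=vc^2v^*$. Expanding $vc^2v^*$ entry by entry via $c^2=ac+ca-c$ and then reducing with $uac=0$, $cau=0$, $a^nc=ac$, $ca^n=ca$, I expect the off-diagonal entries to vanish and only the diagonal $\left(\begin{matrix}-ucu&0\\0&aca\end{matrix}\right)$ to survive. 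Combined with the direct computation $P_0vcv^*+vcv^*P_0 = \left(\begin{matrix}2ucu&-uca\\-acu&0\end{matrix}\right)$, this gives $(P'')^2 = P_0 + P_0vcv^* + vcv^*P_0 + (vcv^*)^2 = P_0 + vcv^* = P''$.

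The only real obstacle is the bookkeeping in the entry-by-entry reduction of $vc^2v^*$, but each of the four entries collapses in one or two steps once $ac=a^2c$ is available. Notably, this argument uses neither $0\leq a,b\leq 1$ nor the universal $C^*$-algebra model of Lemma~\ref{L3}; only the two derived algebraic identities $(a^2-a)(a-b)=0$ and $c^2=ac+ca-c$ are needed.
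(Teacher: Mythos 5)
Your proposal is correct, and since the paper's own proof is just the words ``direct calculation,'' you are carrying out essentially the argument the author intends: the identities $(a^2-a)(a-b)=0$ (derived exactly as at the start of the paper's proof of Lemma~\ref{L3}) and $c^2=ac+ca-c$ do suffice, and I have checked that your reductions $c(v^*v)=c$, $vc^2v^*=\left(\begin{smallmatrix}-ucu&0\\0&aca\end{smallmatrix}\right)$ and $P_0vcv^*+vcv^*P_0=\left(\begin{smallmatrix}2ucu&-uca\\-acu&0\end{smallmatrix}\right)$ all go through. Organizing the computation around the decomposition $P''=P_0+vcv^*$ of the paper's equation (\ref{P''another}) is a clean way to do it, and you are right to note that neither $0\leq a,b\leq 1$ nor the universal-algebra model is needed.
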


\begin{proof}
Direct calculation.
\end{proof}

\begin{remark}
It follows from Lemma \ref{L2}  that if the relations $p(a)=p(b)$ are true only up to some small value then $P''$ is an almost projection.
\end{remark}

The advantage of $P''$ compared with $P'$ is that $P''$ is polynomial in $a$ and $b$, hence easier to use in calculations.

Set $h(t)=\left\lbrace\begin{array}{cl}1,&\mbox{for\ }t>1;\\t,&\mbox{for\ }0\leq t\leq 1;\\0,&\mbox{for\ }t<0.\end{array}\right.$ Then one has $0\leq h(a), h(b)\leq 1$.

\begin{lem}\label{L4}
Let $a$, $b$ be selfadjoints satisfying $p(a)=p(b)$ for $p(t)=t(1-t)$ and $p(t)=t^2(1-t)$. Then $h(a)$ and $h(b)$ also satisfy these relations, and the projections $P''(h(a),h(b))$ and $P''(a,b)$ are homotopic.

\end{lem}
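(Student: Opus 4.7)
The plan is to establish a general fact from which both parts of the lemma follow easily: under the hypotheses $a(1-a)=b(1-b)$ and $a^2(1-a)=b^2(1-b)$, one has $k(a)=k(b)$ for every continuous function $k\colon\mathbb R\to\mathbb R$ with $k(0)=k(1)=0$. I would prove this by Stone--Weierstrass. Put $f_1(t)=t(1-t)$ and $f_2(t)=t^2(1-t)$. For $t\notin\{0,1\}$ one has $f_1(t)\neq 0$ and $t=f_2(t)/f_1(t)$, while both $f_1$ and $f_2$ vanish at $0$ and at $1$. Thus the pair $(f_1,f_2)$ separates the points of the compact set $\operatorname{spec}(a)\cup\operatorname{spec}(b)$ away from $\{0,1\}$ and vanishes exactly on $\{0,1\}$. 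The locally compact form of Stone--Weierstrass then yields that the polynomial subalgebra generated by $f_1$ and $f_2$ (without constant term) is uniformly dense in the space of continuous functions on $\operatorname{spec}(a)\cup\operatorname{spec}(b)$ that vanish at $\{0,1\}$. Any such $k$ is therefore a uniform limit of polynomials $P_n(f_1,f_2)$, and by hypothesis $P_n(f_1(a),f_2(a))=P_n(f_1(b),f_2(b))$ for each $n$; passing to the norm limit gives $k(a)=k(b)$.

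The first assertion of the lemma follows by applying this to $k(t)=h(t)(1-h(t))$ and $k(t)=h(t)^2(1-h(t))$, both of which vanish at $0$ (since $h(0)=0$) and at $1$ (since $h(1)=1$).

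For the homotopy part, I would set $h_s(t)=(1-s)t+sh(t)$ for $s\in[0,1]$, so that $h_0=\operatorname{id}$, $h_1=h$ and $h_s(0)=0$, $h_s(1)=1$ for every $s$. The same general fact, applied to the continuous functions $t\mapsto h_s(t)(1-h_s(t))$ and $t\mapsto h_s(t)^2(1-h_s(t))$, shows that the pair $(h_s(a),h_s(b))$ satisfies the two relations for every $s$; by Lemma~\ref{L2} each $P''(h_s(a),h_s(b))$ is then a projection, and continuity of the continuous functional calculus in the function makes $s\mapsto P''(h_s(a),h_s(b))$ a norm-continuous path of projections connecting $P''(a,b)$ to $P''(h(a),h(b))$.

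The main obstacle is identifying the right general principle to invoke. The naive attempt of applying $x\mapsto x_+$ to $a(1-a)=b(1-b)$ gives the first relation for $h(a),h(b)$ since $h(t)(1-h(t))=[t(1-t)]_+$, but it fails for the second relation, because $t^2(1-t)$ is \emph{positive} for $t<0$ and so $h(t)^2(1-h(t))$ is not the positive part of $t^2(1-t)$. The Stone--Weierstrass observation, which uses both hypotheses simultaneously, bypasses this difficulty and handles both relations (for every $h_s$) uniformly.
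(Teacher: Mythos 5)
Your proof is correct, and it takes a genuinely different route from the paper's. The paper disposes of the first claim in one line via the commutation identity $p(h(a))=h(p(a))=h(p(b))=p(h(b))$, and then runs the same linear homotopy $h_s$ that you use. Your Stone--Weierstrass argument is heavier machinery, but it buys something real: the identity $p\circ h=h\circ p$ holds for $p(t)=t(1-t)$ but \emph{fails} for $p(t)=t^2(1-t)$ on the negative half-line (e.g.\ $p(h(-1))=p(0)=0$ while $h(p(-1))=h(2)=1$), and since the lemma does not assume $0\leq a,b\leq 1$, the spectra may well meet $(-\infty,0)$ --- this is exactly the pitfall you flag at the end of your writeup with the $x\mapsto x_+$ remark. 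Your observation that $f_1(t)=t(1-t)$ and $f_2(t)=t^2(1-t)$ jointly separate points away from $\{0,1\}$ (via $t=f_2(t)/f_1(t)$) and vanish exactly there, so that $k(a)=k(b)$ for every continuous $k$ with $k(0)=k(1)=0$, is the right general principle: it yields both relations for $h(a),h(b)$ at once and, applied to $k=p\circ h_s$, shows that the relations persist along the whole path, so that by Lemma~\ref{L2} each $P''(h_s(a),h_s(b))$ is actually a projection --- a point the paper leaves implicit but which is needed for the homotopy to stay in the set of projections. The only cost is that your argument is less quantitative: the paper's intended functional-calculus identities would transfer directly to the approximate setting of the subsequent Remark (relations holding up to a small $\delta$), whereas a Stone--Weierstrass limit gives no explicit modulus; if you care about that case you would want to exhibit, for the two specific functions $p\circ h_s$, explicit polynomials in $f_1,f_2$ (or at least a quantitative approximation) rather than an abstract density statement.
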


\begin{proof}
The first claim follows from the continuous functional calculus: 
$$
p(h(a))=h(p(a))=h(p(b))=p(h(b)).
$$

Let $h_0(t)=t$, $h_1(t)=h(t)$ and let $h_s$, $s\in[0,1]$, be a (linear) homotopy connecting $h_0$ with $h_1$. Then $P''_s=P''(h_s(a),h_s(b))$ provides the required homotopy.

\end{proof}

\begin{remark}
If the relations in Lemma \ref{L4} are satisfied only up to some small value then the homotopy constructed above lies in the set of almost projections. 

\end{remark}

Thus, if a pair $(\pi^+,\pi^-)$ is $(F,\varepsilon)$-balanced then 
$$
p(\pi^+,\pi^-)=[P''(A_{\pi^+},A_{\pi^-})]-\left[\left(\begin{smallmatrix}1&0\\0&0\end{smallmatrix}\right)\right]
$$ 
determines a class in $K^0(X)$ when $F$ is large and $\varepsilon$ is small.

\begin{remark}
One can write $P''$ as 
\begin{equation}\label{P''another}
P''(a,b)=\left(\begin{matrix}1&0\\0&0\end{matrix}\right)+\left(\begin{matrix}1{-}a\\-a\end{matrix}\right)(a-b)\Bigl(1{-}a,\ \ -a\Bigr).
\end{equation}

\end{remark}

\section{Example}

Let $X$ be a manifold with $\pi_1(X)=\Gamma$, $\widetilde{X}$ its universal covering, $x\in \widetilde{X}$. Set $Y=\Gamma x\cap B_R$, where $B_R\subset\widetilde{X}$ is the ball of radius $R$ centered at $x$. Then the space $l^2(Y)$ of functions on $Y$ is a finitedimensional subspace of $l^2(\Gamma)$. Let $p$ denote the projection in $l^2(\Gamma)$ onto $l^2(Y)$, and let $V$ be an $n$-dimensional complex vector space. Set $H=l^2(Y)\otimes V$.  

Define $\pi(g)$ on $l^2(Y)\otimes V$ by 
$$
\pi(g)=p\lambda(g)|_{l^2(Y)}\otimes\operatorname{id}_V,
$$ 
where $\lambda$ denotes the left regular representation. Let $B_\pm$ be selfadjoint $\operatorname{End}(V)$-valued functions on $\widetilde{X}$, and let $M_{B_\pm}$ denote the operator of multiplication by the function $B_\pm$ on $l^2(\Gamma)\otimes V$. Set 
$$
\pi^\pm(g)=\pi(g)M_{B_\pm}|_{l^2(Y)}\otimes\operatorname{id}_V.
$$

Let us check when the pair $(\pi^+,\pi^-)$ is $(F,\varepsilon)$-balanced. Let $y\in Y$, $\delta_y\in l^2(Y)$ the corresponding delta-function. Then 
$$
(\pi^\pm(gh)-\pi^\pm(g)\pi^\pm(h))\delta_y=\left\lbrace\begin{array}{cl}(1-B_\pm(hy))B_\pm(y)\delta_{ghy}&\mbox{if\ } hy\in Y, ghy\in Y; \\
B_\pm(y)\delta_{ghy}& \mbox{if\ } hy\notin Y, ghy\in Y; \\0&\mbox{otherwise.}\end{array}\right.  
$$

Set 
$$
m_1=\sup\{\|B_+(y)-B_-(y)\|: y\in Y, ghy\in Y, hy\notin Y\};
$$
$$
m_2=\sup\{\|(1-B_+(hy))B_+(y)-(1-B_-(hy))B_-(y)\|: y\in Y, ghy\in Y, hy\in Y\}.
$$
Then 
$$
\|M_{\pi^+}(g,h)-M_{\pi^-}(g,h)\|=\|(\pi^+(gh)-\pi^+(g)\pi^+(h))-(\pi^-(gh)-\pi^-(g)\pi^-(h))\|=\max(m_1,m_2).
$$
A similar estimate involving also $\gamma\in\Gamma$ can be written for $\|\pi^+(\gamma)M_{\pi^+}(g,h)-\pi^-(\gamma)M_{\pi^-}(g,h)\|$.
Now suppose that the $\operatorname{End}(V)$-valued functions $B_\pm$ have small variation, i.e. satisfy the estimate 
\begin{equation}\label{Lip}
\|B_\pm(hy)-B_\pm(y)\|<\delta \mbox{\ \ for\ any\ }y\in Y \mbox{\ and\ any\ }h\in F\subset\Gamma. 
\end{equation}

\begin{lem}
Assume that $B_+(y)=B_-(y)$ for all $y$ with $d(y,x)\geq R$, and that (\ref{Lip}) holds. There exists a constant $C$ such that 
if the pair $(\pi^+,\pi^-)$ is $(F,\varepsilon)$-balanced then $\|p(B_+)-p(B_-)\|<\varepsilon+C\delta$ for $p=t(1-t)$ and $p(t)=t^2(1-t)$, and, conversely, if $\|p(B_+)-p(B_-)\|<\varepsilon$ then the pair $(\pi^+,\pi^-)$ is $(F,\varepsilon+C\delta)$-balanced.

\end{lem}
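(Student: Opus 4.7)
The approach is to extend the case-by-case computation of $M_{\pi^\pm}(g,h)\delta_y$ preceding the lemma to the three-factor operator $\pi^\pm(k)M_{\pi^\pm}(g,h)\delta_y$, and then use the estimate (\ref{Lip}) as a dictionary converting $B_\pm(y)$, $B_\pm(hy)$, $B_\pm(ghy)$ into a single value $B_\pm(y)$ modulo $O(\delta)$. Algebraically $(1-B_\pm(y))B_\pm(y)=p(B_\pm)(y)$ for $p(t)=t(1-t)$, and $B_\pm(y)(1-B_\pm(y))B_\pm(y)=p(B_\pm)(y)$ for $p(t)=t^2(1-t)$, so these combinations are exactly the two polynomials appearing in the statement.

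For \emph{closeness of $p(B_\pm)$ $\Rightarrow$ balanced} I would estimate $m_1$ and $m_2$ separately. For $m_1$, the conditions $y\in Y$ and $hy\notin Y$ force $d(hy,x)\geq R$, so the hypothesis $B_+(hy)=B_-(hy)$ together with two applications of (\ref{Lip}) yields $\|B_+(y)-B_-(y)\|\leq 2\delta$. For $m_2$ (where $hy\in Y$), replacing $B_\pm(hy)$ by $B_\pm(y)$ via (\ref{Lip}) gives
$$
(1-B_\pm(hy))B_\pm(y)=p(B_\pm)(y)+E_\pm,\qquad \|E_\pm\|\leq\delta\sup_y\|B_\pm(y)\|,
$$
and hence $m_2\leq\|p(B_+)-p(B_-)\|+C_1\delta$. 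The analogous reduction of the triple product $B_\pm(ghy)(1-B_\pm(hy))B_\pm(y)$ to $p(B_\pm)(y)$ for $p(t)=t^2(1-t)$ handles (\ref{condition2'}). Taking $C=\max(2,C_1,C_2)$ with constants depending only on $\sup\|B_\pm\|$ gives the $(F,\varepsilon+C\delta)$-balanced conclusion.

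For the converse I would set $g=h=e$ in (\ref{condition2}) and $g=h=k=e$ in (\ref{condition2'}), assuming as we may that $e\in F$. Then $M_{\pi^\pm}(e,e)$ acts diagonally on $l^2(Y)\otimes V$ via $\delta_y\mapsto p(B_\pm)(y)\delta_y$, so
$$
\|M_{\pi^+}(e,e)-M_{\pi^-}(e,e)\|=\sup_{y\in Y}\|p(B_+)(y)-p(B_-)(y)\|.
$$
Because $B_+=B_-$ outside $B_R$, this supremum equals the operator norm $\|p(B_+)-p(B_-)\|$; the balanced hypothesis bounds it by $\varepsilon$. The same computation at $g=h=k=e$ applied to (\ref{condition2'}) extracts $p(B_\pm)(y)$ for $p(t)=t^2(1-t)$ and gives the matching bound.

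\textbf{Main obstacle.} The bookkeeping of the several boundary cases in which $hy$, $ghy$, or $kghy$ escapes $Y$. The assumption $B_+=B_-$ outside $B_R$ is tailored to these: once we leave $Y$ the two functions agree exactly, and (\ref{Lip}) propagates the agreement back into $Y$ with an error of order $\delta$. Once this is set up, the remaining estimates are routine multiplicative norm inequalities, and the constant $C$ in the statement is visibly controlled by $\sup\|B_\pm\|$ and the finite number of boundary cases to check.
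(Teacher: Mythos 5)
Your proposal is correct and follows the same idea as the paper, whose entire proof is the one-line remark that up to $C\delta$ one may not distinguish between $B_\pm(hy)$ and $B_\pm(y)$, after which the claim is declared obvious. Your write-up simply elaborates that identification in detail — reducing the two- and three-factor coefficients to $p(B_\pm)(y)$ for $p(t)=t(1-t)$ and $p(t)=t^2(1-t)$, handling the boundary cases via $B_+=B_-$ outside $B_R$, and extracting the converse at $g=h=k=e$ — so it is a faithful (and more complete) version of the paper's argument.
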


\begin{proof}
Up to $C\delta$, we may not distinguish between $B_\pm(hy)$ and $B_\pm(y)$. Then the claim becomes obvious.

\end{proof}

Remark that the pairs $(B_+,B_-)$ with the above properties can be considered as elements of $K_0(C_0(\widetilde{X}))=K^0_c(\widetilde{X})$. 

Starting from a class $z\in K_0(C_0(\widetilde{X}))$, take a pair $(B_+,B_-)$ that represents $z$, then construct $(\pi^+_R,\pi^-_R)$, where $R$ is the radius of the ball that determines $Y$, as above. The pair $(\pi^+_R,\pi^-_R)$ is asymptotically balanced as $R\to\infty$. Then, using $\pi^\pm_R$, define $A_{\pm,R}$ as in (\ref{A}), where the cocycle $\gamma$ determines the Mishchenko line bundle. Finally take $P''(A_{+,R},A_{-,R})$, which determines a class in $K^0(X)$.

\begin{lem}
The construction described above defines a map 
$$
z\mapsto [P''(A_{+,R},A_{-,R})]-\left[\left(\begin{smallmatrix}1&0\\0&0\end{smallmatrix}\right)\right], \quad K^0_c(\widetilde{X})\to K^0(X), 
$$
which coinsides with the direct image map.

\end{lem}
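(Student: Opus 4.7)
The plan has two parts: well-definedness of the map, and identification with the direct image. For well-definedness, a class $z \in K^0_c(\widetilde{X})$ is represented by a compactly supported selfadjoint pair $(B_+, B_-)$ satisfying $p(B_+) = p(B_-)$ for $p(t) = t(1-t)$ and $t^2(1-t)$. To apply the preceding Lemma, one must arrange for the variation estimate (\ref{Lip}) to hold for a prescribed finite $F \subset \Gamma$ with arbitrarily small $\delta$; this is achieved by taking $R$ large and dilating the representative so that $B_\pm$ is slowly varying on the scale of $F$. Then $(\pi^+_R, \pi^-_R)$ is $(F, C\delta)$-balanced, and via construction (\ref{A}) the pair $(A_{+,R}, A_{-,R})$ satisfies (\ref{K2}), so $P''(A_{+,R}, A_{-,R})$ is an almost projection by Lemma \ref{L2} and its Remark. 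Hence $[P''(A_{+,R}, A_{-,R})] - [1 \oplus 0]$ determines a class in $K^0(X)$. Independence of the representative follows by applying the construction to a homotopy of representatives and obtaining a continuous family of almost projections; independence of $R$ for $R$ large follows from radial interpolation together with the preceding Lemma applied to the interpolating family.

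For the identification, recall that $i_! : K^0_c(\widetilde{X}) \to K^0(X)$ is computed by the Mishchenko--Fomenko construction: a compactly supported projection on $\widetilde{X}$ is combined with the flat bundle on $X$ defined by the cocycle $\gamma$, producing a vector bundle on $X$. I would proceed by reducing to the case $(B_+, B_-) = (B, 0)$ with $B$ a genuine projection whose support lies in (a dilation of) a single fundamental domain. For $R$ large, $A_{+,R}$ is an almost projection close in norm to a genuine projection $\widetilde{A}_{+,R}$; one checks directly that $\widetilde{A}_{+,R}$ is the standard Mishchenko--Fomenko formula for $i_![B]$ with respect to $(\{U_i\}, \varphi_i, \gamma)$. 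Since $A_{-,R}$ is zero in this case, the remark in Section 3 on how the $P$ formula reduces for genuine projections yields $[P''(\widetilde{A}_{+,R}, 0)] - [1 \oplus 0] = [\widetilde{A}_{+,R}]$, and continuity of $P''$ in operator norm together with Lemma \ref{L4} gives the same class for the almost-projection version. Both sides are additive under direct sums and homotopy invariant, so the general case follows.

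The main obstacle is the reduction step. For a general $z \in K^0_c(\widetilde{X})$, a representative pair $(B_+, B_-)$ need not split into contributions supported in single fundamental domains; instead one must either use a Mayer--Vietoris argument to glue contributions over overlapping domains (where the cocycle $\gamma$ genuinely enters, reflecting the non-triviality of the Mishchenko bundle), or verify the formula on a generating set of classes and invoke naturality under the inclusions of fundamental-domain neighborhoods. A further technical point is the quantitative passage from the almost-projection $A_{\pm,R}$ to a nearby genuine projection $\widetilde{A}_{\pm,R}$: one must ensure this comparison is compatible with the Mishchenko--Fomenko formula uniformly as $R \to \infty$, which is plausible by Lemma \ref{L4} but requires careful bookkeeping of how the cover $\{U_i\}$ and cocycle $\gamma$ interact with the growing radius $R$.
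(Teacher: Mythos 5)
There is a genuine gap, and you have in fact flagged it yourself: the ``reduction step'' in your identification argument does not go through, and no workable substitute is supplied. A class in $K^0_c(\widetilde{X})$ cannot in general be represented by a projection supported in (a dilation of) a single fundamental domain, and classes that \emph{are} so supported do not generate $K^0_c(\widetilde{X})$ in any way that lets you see the direct image map: for such a representative the cocycle $\gamma$ never acts nontrivially on the support, so you would only ever verify the formula on classes whose push-forward is essentially trivial. The proposed alternatives (a Mayer--Vietoris gluing, or ``verify on a generating set and invoke naturality'') are named but not carried out, and each of them is itself a substantial argument. So the central claim of the lemma --- that the map coincides with $i_!$ --- is not actually established by your proposal.

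The paper takes a different and much shorter route that avoids re-deriving the Mishchenko--Fomenko identification. It passes to the \emph{infinite-dimensional} picture: with $B_-$ a constant projection and $B_+$ equal to $B_-$ at infinity, the operators $\pi^\pm$ on all of $l^2(\Gamma)\otimes V$ form a genuine Fredholm representation, $A_\pm(x)=\pi^\pm(p(x))$ are honest projection-valued functions, $[P''(A_+,A_-)]-[1\oplus 0]=[A_+]-[A_-]$, and the identity $i_!([B_+]-[B_-])=[A_+]-[A_-]$ is simply quoted from Mishchenko--Teleman. The only thing left to prove is that $P''(A_{+,R},A_{-,R})\to P''(A_+,A_-)$ in norm as $R\to\infty$; this is done by combining the obvious $*$-strong convergence with an estimate, via formula (\ref{P''another}) and the quasi-isometry between the word metric and the metric on $\widetilde{X}$, showing that $P''(A_{+,R},A_{-,R})-(1\oplus 0)$ restricted to the complement $L_R$ of $l^2(Y)\otimes V$ is controlled by $\sup_{x\notin B_{CR}}|B_+(x)-B_-(x)|\to 0$. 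If you want to salvage your approach, the cleanest fix is to adopt this comparison: cite the known infinite-dimensional statement and reduce your lemma to a norm-approximation estimate between the cut-off construction and the full one, rather than attempting a direct computation of $i_!$ on special representatives.
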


\begin{proof}
Let $B_-$ be a constant projection on $\widetilde{X}$, and let $B_+$ be a projection-valued function on $\widetilde{X}$, which is equal to $B_-$ at infinity. 
Let $\pi^-(g)=\lambda(g)\otimes\operatorname{id}_V$ be the left regular representation on $l^2(\Gamma)\otimes V$, and let $\pi^+(g)=\pi^-(g)M_{B_+}$. Then $\pi^+(g)-\pi^-(g)$ is compact for any $g\in\Gamma$, so $(\pi^+,\pi^-)$ is a Fredholm representation. Define $A_\pm$ by $A_\pm(x)=\pi^\pm(p(x))$, where $p$ is defined in (\ref{proj}), $x\in X$. Then $A_\pm$ are projection-valued functions on $X$, and $[P''(A_+,A_-)]-\left[\left(\begin{smallmatrix}1&0\\0&0\end{smallmatrix}\right)\right]$ is unitarily equivalent to $[A_+]-[A_-]$. 

It is shown in \cite{Mish-Tel} that $i_!([B_+]-[B_-])=[A_+]-[A_-]$, where $i_!:K^0_c(\widetilde{X})\to K^0(X)$ is the direct image map. Our proof would follow if we show that $P''(A_{+,R},A_{-,R})$ is convergent to $P''(A_+,A_-)$ in norm, as $R\to\infty$ ($*$-strong convergence is obvious).

Denote by $L_R$ the orthogonal complement to $l^2(Y)\otimes V$ in $l^2(\Gamma)\otimes V$. As the word length metric on $\Gamma$ is quasi-equivalent to the metric on $\widetilde{X}$, so there is a constant $0<C<1$ such that $A_\pm\xi, A_{\pm,R}\xi\in L_{CR}$ when $\xi\in L_R$, for sufficiently great $R$. Therefore, for the restriction onto $L_R$ we have, using (\ref{P''another}), the following estimate: 
$$
\left\|\left(P''(A_{+,R},A_{-,R})-\left(\begin{smallmatrix}1&0\\0&0\end{smallmatrix}\right)\right)\left|_{L_R}\right.\right\|\leq 
\|(A_{+,R}-A_{-,R})|_{L_{CR}}\|\leq\sup_{x\notin B_{CR}}|B_+(x)-B_-(x)|\to 0 \mbox{\ as\ }R\to\infty,
$$
and, similarly, 
$$
\lim_{R\to\infty}\left\|\left(P''(A_{+,R},A_{-,R})-\left(\begin{smallmatrix}1&0\\0&0\end{smallmatrix}\right)\right)\left|_{L_R}\right.\right\|=0.
$$
As all the operators involved are selfadjoint, so this, together with the $*$-strong convergence, implies the norm convergence.

\end{proof}

{\bf Acknowledgement.} The author is grateful to the Vietnam Institute of Advanced Studies in Mathematics for hospitality.

\end{document}